\theoremstyle{plain}
\newtheorem{lemma}{Lemma}
\newtheorem{proposition}{Proposition}
\newtheorem{remark}{Remark}
\newtheorem{theorem}{Theorem}
\numberwithin{equation}{section}
\begin{document}
\title{Global well-posedness of the radial conformal nonlinear wave equation with initial data in a critical space}
\date{\today}
\author{Benjamin Dodson}
\maketitle

\begin{abstract}
In this note we prove global well-posedness and scattering for the conformal, defocusing, nonlinear wave equation with radial initial data in a critical Besov space. We also prove a polynomial bound on the scattering norm.
\end{abstract}

\section{Introduction}
Consider the defocusing wave equation
\begin{equation}\label{1.1}
u_{tt} - \Delta u + |u|^{\frac{4}{d - 1}} u = 0, \qquad u(0,x) = u_{0}, \qquad u_{t}(0,x) = u_{1},
\end{equation}
where $(u_{0}, u_{1}) \in B_{1,1}^{\frac{d}{2} + \frac{1}{2}} \times B_{1,1}^{\frac{d}{2} - \frac{1}{2}}$ is radially symmetric, in any dimension $d \geq 3$. Previous work on this problem has addressed the $d = 3$ and $d = 5$ cases. In this paper we prove

\begin{theorem}\label{t1.1}
When $d \geq 4$, if $(u_{0}, u_{1}) \in B_{1,1}^{\frac{d}{2} + \frac{1}{2}} \times B_{1,1}^{\frac{d}{2} - \frac{1}{2}}$ is radially symmetric, then $(\ref{1.1})$ has a global solution that scatters. Moreover, if
\begin{equation}\label{1.2}
\| u_{0} \|_{B_{1,1}^{\frac{d}{2} + \frac{1}{2}}} + \| u_{1} \|_{B_{1,1}^{\frac{d}{2} - \frac{1}{2}}} = A,
\end{equation}
then
\begin{equation}\label{1.3}
\| u \|_{L_{t,x}^{\frac{2(d + 1)}{d - 1}}(\mathbb{R} \times \mathbb{R}^{d})} \lesssim  A + A^{\frac{d - 1}{d - 3}}.
\end{equation}
\end{theorem}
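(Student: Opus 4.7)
The plan is to exploit the conformal invariance of both the nonlinearity $|u|^{4/(d-1)}u$ and the scattering norm $L^{2(d+1)/(d-1)}_{t,x}$, reducing the global-in-time analysis to a local one and then running a bootstrap induction on the data size $A$. This is the broad strategy used in the author's earlier work for $d=3,5$, now adapted to general $d\geq 4$.

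First, I would establish the linear Strichartz estimate
\[
\bigl\|e^{it\sqrt{-\Delta}}f\bigr\|_{L^{2(d+1)/(d-1)}_{t,x}(\mathbb{R}\times\mathbb{R}^d)} \lesssim \|f\|_{B^{d/2+1/2}_{1,1}},
\]
by interpolating the dispersive bound $\|e^{it\sqrt{-\Delta}}P_N f\|_{L^\infty_x}\lesssim N^{(d-1)/2}\langle Nt\rangle^{-(d-1)/2}\|P_N f\|_{L^1}$ with $L^2$ conservation on each dyadic piece, then summing via the $\ell^1$ structure of the Besov norm. A contraction-mapping argument on the Duhamel formula, together with its inhomogeneous counterpart, would yield small-data global well-posedness and scattering with norm $\lesssim A$, proving \eqref{1.3} when $A$ is below a universal threshold $A_0$.

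For large data, I would apply the Penrose compactification $\tau = \arctan(t+r)+\arctan(t-r)$, $\psi = \arctan(t+r)-\arctan(t-r)$, mapping Minkowski space onto a bounded region of $\mathbb{R}\times S^d$. Since the equation is conformally covariant at this nonlinear power and $L^{2(d+1)/(d-1)}_{t,x}$ is conformally invariant, the problem becomes one on a finite spacetime slab with comparable critical Besov data. On this compactified picture, I would induct on $A$: at each step, peel off a high-frequency portion of the data whose Besov norm sits below the small-data threshold (handled by the previous step), and treat the low-frequency remainder by a perturbation/stability argument about a solution with strictly smaller Besov norm, using the radial Strauss bound $|u(x)|\lesssim |x|^{-(d-1)/2}\|u\|_{H^{d/2+1/2}_{\mathrm{rad}}}$ to gain pointwise decay. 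Counting the number of steps needed to reach the small-data regime is expected to produce the exponent $(d-1)/(d-3) = 1+2/(d-3)$, finite precisely because $d\geq 4$.

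The main obstacle will be closing the bootstrap step with multiplicative losses that compound polynomially rather than exponentially in $A$. Unlike the $d=3,5$ cases, which enjoy sharp Huygens' principle, the general case $d\geq 4$ includes even dimensions where the fundamental solution has a nontrivial tail inside the light cone; accommodating this tail requires careful paraproduct estimates between the high- and low-frequency portions of the solution combined with the radial decay bound above. Once these interaction estimates are obtained with the correct quantitative loss, the iteration count and hence the polynomial bound \eqref{1.3} should follow.
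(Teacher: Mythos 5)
Your overall strategy---Penrose compactification plus an induction on the size of the Besov norm, peeling off small-norm frequency pieces and closing each step by perturbation theory---is essentially the route of the earlier $d=3,5$ works recalled in Proposition \ref{p1.2}, and it is precisely the route that does \emph{not} yield a polynomial bound: each perturbative step loses a multiplicative constant, the number of steps grows like a power of $A$, and the resulting bound is at best exponential in a power of $A$. Your assertion that ``counting the number of steps'' produces the exponent $\frac{d-1}{d-3}$ is the critical unsupported claim; you give no mechanism by which the losses compound polynomially rather than exponentially, and you yourself flag this as ``the main obstacle'' without resolving it. The restriction $d\geq 4$ also does not enter through sharp Huygens' principle or the parity of the dimension.

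The paper's argument is different and more direct, and its key idea is absent from your proposal. Write $u = v + w$, where $w$ is the free evolution of the data (so $\|w\|_{L^{2(d+1)/(d-1)}_{t,x}} \lesssim A$ by Strichartz) and $v$ solves the forced equation \eqref{4.2} with \emph{zero} initial data. Then the conformal energy $\mathcal{E}(t)$ of $v$ from Lemma \ref{l3.2} vanishes at $t=0$, and its growth is driven only by the cross terms $F$ in \eqref{4.4}. The radial symmetry and the $B_{1,1}^{d/2 \pm 1/2}$ regularity are used exactly once, to obtain the pointwise decay $\| t |w|^{2/(d-1)} \|_{L^{\infty}} + \| |x| |w|^{2/(d-1)} \|_{L^{\infty}} \lesssim A^{2/(d-1)}$ via the dispersive estimate and the radial Sobolev embedding. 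Feeding this into $\frac{d}{dt}\mathcal{E}$ and closing with Fubini and H\"older yields a nonlinear Gronwall inequality of the schematic form $X \lesssim A^{\frac{d+3}{d-1}} X^{1/2} + A\, X^{\frac{d+5}{2(d+1)}}$ for $X = \int t^{-2} \mathcal{E}(t)\, dt$, a quantity which controls $\| v \|_{L^{2(d+1)/(d-1)}_{t,x}}^{2(d+1)/(d-1)}$. This absorbs precisely because $\frac{d+5}{2(d+1)} < 1$, i.e.\ $d > 3$, and solving for $X$ produces the exponent $\frac{d-1}{d-3}$ in \eqref{1.3}. That single algebraic step is the source of both the polynomial bound and the dimensional restriction, and nothing in your compactification-plus-induction scheme substitutes for it.
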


In general, the nonlinear wave equation,
\begin{equation}\label{1.4}
u_{tt} - \Delta u + |u|^{p - 1} u = 0,
\end{equation}
has the scaling symmetry
\begin{equation}\label{1.5}
u(t,x) \mapsto \lambda^{\frac{2}{p - 1}} u(\lambda t, \lambda x).
\end{equation}
That is, if $u(t,x)$ solves $(\ref{1.4})$, then the right hand side of $(\ref{1.5})$ solves $(\ref{1.4})$ for any $\lambda > 0$. Then set
\begin{equation}\label{1.6}
s_{c} = \frac{d}{2} - \frac{2}{p - 1}.
\end{equation}
Then the $\dot{H}^{s_{c}}(\mathbb{R}^{d}) \times \dot{H}^{s_{c} - 1}(\mathbb{R}^{d})$ norm of the initial data is invariant under $(\ref{1.5})$. So for $p - 1 = \frac{4}{d - 1}$, $s_{c} = \frac{1}{2}$. The results of \cite{lindblad1995existence} completely determine the local behavior of $(\ref{1.1})$. Moreover, global well-posedness and scattering hold for small initial data. See Theorem $\ref{t2.2}$ (where we recall the work of \cite{lindblad1995existence}). Moreover, \cite{lindblad1995existence} proved that this result is sharp, that is, local well-posedness for $(\ref{1.1})$ does not hold for initial data in $\dot{H}^{s} \times \dot{H}^{s - 1}$ for any $s < \frac{1}{2}$.
\begin{remark}
The $L_{t,x}^{\frac{2(d + 1)}{d - 1}}(\mathbb{R} \times \mathbb{R}^{d})$ norm is invariant under the scaling $(\ref{1.5})$.
\end{remark}

It is conjectured that global well-posedness and scattering hold for $(\ref{1.1})$ for initial data in $\dot{H}^{1/2} \times \dot{H}^{-1/2}$. This has been resolved in the affirmative when $d = 3$ for radially symmetric initial data. See \cite{dodson2018global} and also \cite{dodson2018global2}.\medskip

The Besov spaces $B_{1,1}^{s}$ are defined by the Littlewood--Paley partition of unity.
\begin{equation}\label{1.7}
\| f \|_{B_{1,1}^{s}(\mathbb{R}^{d})} = \sum_{j \in \mathbb{Z}} 2^{js} \| P_{j} f \|_{L^{1}(\mathbb{R}^{d})}.
\end{equation}
The Sobolev embedding theorem implies $B_{1,1}^{\frac{d}{2} + \frac{1}{2}} \subset \dot{H}^{\frac{1}{2}}$ and $B_{1,1}^{\frac{d}{2} - \frac{1}{2}} \subset \dot{H}^{-1/2}$. The space $B_{1,1}^{\frac{d}{2} + \frac{1}{2}} \times B_{1,1}^{\frac{d}{2} - \frac{1}{2}}$ for the initial data is also invariant under $(\ref{1.5})$.\medskip

Study of dispersive partial differential equations with initial data in a Besov space has proved to be quite fruitful. 
\begin{proposition}\label{p1.2}
When $d = 3$, for every radial initial data $(u_{0}, u_{1}) \in B_{1,1}^{2} \times B_{1,1}^{1}(\mathbb{R}^{3})$, let $u$ be the solution to $(\ref{1.1})$. Then there exists a function $A : [0, \infty) \rightarrow [0, \infty)$ such that
\begin{equation}\label{1.8}
\| u \|_{L_{t,x}^{4}(\mathbb{R} \times \mathbb{R}^{3})} \leq A(\| (u_{0}, u_{1}) \|_{B_{1,1}^{2} \times B_{1,1}^{1}(\mathbb{R}^{3})}).
\end{equation}
When $d = 5$, for every radial initial data $(u_{0}, u_{1}) \in B_{1,1}^{3} \times B_{1,1}^{2}(\mathbb{R}^{5})$, let $u$ be the solution to $(\ref{1.1})$. Then there exists a function $A : [0, \infty) \rightarrow [0, \infty)$ and a parameter $\delta_{1} > 0$ that depends on the initial data $(u_{0}, u_{1})$ such that
\begin{equation}\label{1.9}
\| u \|_{L_{t,x}^{3}(\mathbb{R} \times \mathbb{R}^{5})} \leq A(\| (u_{0}, u_{1}) \|_{B_{1,1}^{3} \times B_{1,1}^{2}}, \delta_{1}).
\end{equation}
\end{proposition}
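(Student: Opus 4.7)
The plan is to treat $d=3$ and $d=5$ separately, reducing each to a global analysis that is available either through a critical Sobolev theory (for $d=3$) or via a direct frequency-localized argument in the Besov scale (for $d=5$).

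For $d=3$, I would first verify the Besov embeddings $B^{2}_{1,1}(\mathbb{R}^{3})\hookrightarrow \dot{H}^{1/2}(\mathbb{R}^{3})$ and $B^{1}_{1,1}(\mathbb{R}^{3})\hookrightarrow \dot{H}^{-1/2}(\mathbb{R}^{3})$; each follows from a single application of Bernstein's inequality to every Littlewood--Paley piece, giving a linear bound on the critical Sobolev norm by the Besov norm. The conclusion $(\ref{1.8})$ then follows by invoking Dodson's global well-posedness and scattering theorem for radial $\dot{H}^{1/2}\times\dot{H}^{-1/2}$ data \cite{dodson2018global}, which already produces a quantitative bound on $\|u\|_{L_{t,x}^{4}}$ as a function of the critical Sobolev norm; the function $A$ in $(\ref{1.8})$ is the composition of the two bounds.

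For $d=5$ no critical Sobolev theory is known, so I would work directly in the Besov scale. My plan is a frequency-localized induction: dyadically decompose the initial data, dispose of sufficiently high frequencies via small-data scattering (Theorem~\ref{t2.2}), and choose a frequency threshold $N_{0}=N_{0}(u_{0},u_{1})$ so that the Besov tail beyond $N_{0}$ lies below the small-data threshold. I would identify $\delta_{1}$ with a monotone function of this threshold, which explains why the final bound cannot be uniform in the Besov norm alone. The finite-frequency core is then treated as a sub-critical solution whose global $L_{t,x}^{3}$ bound is obtained from the Morawetz monotonicity, the conformal conservation identity, and the radial Sobolev inequality; the full solution is recovered from the truncated approximation by a stability argument in Strichartz norms.

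The hard part will be the $d=5$ analysis, specifically closing the global bootstrap for the finite-frequency core without a critical Sobolev conservation law to fall back on. Only the Morawetz and conformal identities, together with radial weighted Strichartz estimates, are available, and the fact that $|u|u$ is merely $C^{1,1}$ restricts the regularity in which the stability theory can be closed. Tracking the $\delta_{1}$-dependence coherently through every step of the perturbation -- so that it enters the final bound only through the initial frequency truncation and not through cascading losses -- is the delicate point, and is the reason the argument in \cite{dodson2018global2} is significantly more elaborate than the three-dimensional counterpart.
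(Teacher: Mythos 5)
The paper does not actually prove Proposition \ref{p1.2}: it is a recollection of prior results, and the ``proof'' is a citation to \cite{dodson2018globalAPDE} for $d=3$ and \cite{miao2020global} for $d=5$. Your $d=3$ reduction (Bernstein gives $B^{2}_{1,1}\times B^{1}_{1,1}\hookrightarrow \dot{H}^{1/2}\times\dot{H}^{-1/2}$, then invoke the radial critical-Sobolev theorem of \cite{dodson2018global}) is logically sound as far as global well-posedness and scattering go, though you should check that the theorem you invoke really delivers a bound on $\|u\|_{L^{4}_{t,x}}$ as a \emph{function of the norm} of the data rather than a qualitative finiteness statement for each datum; if not, an extra profile-decomposition argument is needed to upgrade it, and the historically correct route is the direct conformal-energy argument of \cite{dodson2018globalAPDE}.

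The $d=5$ part of your plan has a genuine gap. You propose to truncate at a frequency threshold $N_{0}$ and treat the ``finite-frequency core'' $P_{\leq N_{0}}(u_{0},u_{1})$ as a subcritical solution controlled by the Morawetz and conformal identities. But for $u_{0}\in B^{3}_{1,1}(\mathbb{R}^{5})$, Bernstein gives $\|P_{j}u_{0}\|_{L^{2}}\lesssim 2^{5j/2}\|P_{j}u_{0}\|_{L^{1}} = 2^{-j/2}\cdot 2^{3j}\|P_{j}u_{0}\|_{L^{1}}$, and the factor $2^{-j/2}$ blows up as $j\to-\infty$; the low-frequency truncation therefore need not lie in $L^{2}$, let alone have finite energy or finite conformal energy (which further requires weighted $L^{2}$ control such as $\||x|\nabla u_{0}\|_{L^{2}}<\infty$). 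So the monotonicity/conservation laws you want to run on the core are applied to quantities that are generically infinite, and the bootstrap never gets started. The device that makes the conformal energy usable for Besov data --- both in \cite{miao2020global} and in Section 4 of this paper --- is the decomposition $u=v+w$ with $w$ the \emph{free} evolution of the full rough data and $v$ the nonlinear correction with \emph{zero} initial data, so that $\mathcal{E}(v)(0)=0$ exactly; one then controls the growth of $\mathcal{E}(v)$ through the forcing terms involving $w$, using the dispersive estimate and the radial Sobolev inequality, both of which exploit the $L^{1}$-based Besov regularity of $w$. Your sketch is missing this idea, and without it the $\delta_{1}$-bookkeeping you describe has nothing to attach to.
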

\begin{proof}
See \cite{dodson2018globalAPDE} for the result in dimension $d = 3$ and \cite{miao2020global} in dimension $d = 5$.
\end{proof}
In this paper we extend Proposition $\ref{p1.2}$ to any dimension and in addition obtain polynomial bounds in dimensions $d \geq 4$. We do not obtain an improved bound in dimension $d = 3$.\medskip

Polynomial bounds were also obtained for the nonlinear Schr{\"o}dinger equation with initial data in a critical Besov space in \cite{dodson2021scattering}. As in the nonlinear wave case, polynomial bounds were obtained for solutions to the nonlinear Schr{\"o}dinger equation
\begin{equation}\label{1.10}
i u_{t} + \Delta u = |u|^{p - 1} u,
\end{equation}
when $1 < p < 3$, but when $p = 3$ only bounds of the form $(\ref{1.8})$ were obtained.\medskip

There are many initial data that fall into the category of radially symmetric $B_{1,1}^{\frac{d}{2} + \frac{1}{2}} \times B_{1,1}^{\frac{d}{2} - \frac{1}{2}}$. Suppose for example that $u_{0}$ is a Gaussian function and $u_{1} = 0$. Since the Gaussian function is smooth, rapidly decreasing, as are all its derivatives, the initial data satisfies $(\ref{3.2})$, and therefore the work of \cite{strauss1968decay} implies that the solution to $(\ref{1.1})$ with $u_{0} = C e^{-|x|^{2}}$ and $u_{1} = 0$ satisfies
\begin{equation}\label{1.11}
\| u \|_{L_{t,x}^{\frac{2(d + 1)}{d - 1}}(\mathbb{R} \times \mathbb{R}^{d})} \lesssim C + C^{c(d)},
\end{equation}
where $c(d) > 1$ depends only on the dimension.
\begin{remark}
We will recall the work of \cite{strauss1968decay} in Lemma $\ref{l3.2}$, 
\end{remark}

 The same bounds also hold for initial data rescaled under $(\ref{1.5})$. Thus, $(\ref{1.11})$ also holds for $u_{1} = 0$ and $u_{0} = C \lambda^{\frac{d - 1}{2}} e^{-\lambda^{2} |x|^{2}}$, for any $\lambda > 0$. Moreover, the solution to $(\ref{1.1})$ with such initial data also satisfies $(\ref{1.11})$.\medskip

Now take $\lambda_{1} \ll 1$ and $\lambda_{2} \gg 1$, and assume $C_{1}, C_{2}, C_{0} > 0$ are constants. Then take $u_{1} = 0$ and
\begin{equation}\label{1.12}
u_{0} = C_{1} \lambda_{1}^{\frac{d - 1}{2}} e^{-\lambda_{1}^{2} |x|^{2}} + C_{0} e^{-|x|^{2}} + C_{2} \lambda_{2}^{\frac{d - 1}{2}} e^{-\lambda_{2}^{2} |x|^{2}}.
\end{equation}
In this case, it is reasonable to conjecture that the bounds on a solution to $(\ref{1.1})$ will be some polynomial function of $C_{1} + C_{2} + C_{0}$, by $(\ref{1.11})$ and the heuristic that solutions to $(\ref{1.1})$ at very different frequencies are basically decoupled. However, if one merely plugs $(\ref{1.12})$ into Lemma $\ref{l3.2}$, in the case of initial data given by $(\ref{1.12})$, the scattering size bounds on the right hand side of $(\ref{1.11})$ will depend on $\lambda_{1}^{-1}$ and $\lambda_{2}$. However taking $\lambda_{1} \searrow 0$ and $\lambda_{2} \nearrow \infty$ ought to increase the strength of the decoupling.\medskip

By proving Theorem $\ref{t1.1}$ we resolve this issue.
\begin{remark}
Throughout this paper, a solution to $(\ref{1.1})$ refers to a strong solution. That is, $u$ satisfies Duhamel's principle,
\begin{equation}\label{1.13}
(u(t), u_{t}(t)) = S(t)(u_{0}, u_{1}) - \int_{0}^{t} S(t - \tau)(0, |u|^{\frac{4}{p - 1}} u) d\tau.
\end{equation}
The notation $S(t)(f, g)$ denotes $(u(t), u_{t}(t))$, where $u$ is a solution to the linear wave equation,
\begin{equation}\label{1.14}
u_{tt} - \Delta u = 0, \qquad u(0,x) = f(x), \qquad u_{t}(0,x) = g(x).
\end{equation}
Global well-posedness has the canonical definition, taking a solution to mean a strong solution.\medskip

Scattering implies that as $t \rightarrow +\infty$ or $t \rightarrow -\infty$, the solution looks like a solution to $(\ref{1.14})$. That is, there exist $(u_{0}^{+}, u_{1}^{+}) \in \dot{H}^{1/2} \times \dot{H}^{-1/2}$ and $(u_{0}^{-}, u_{1}^{-}) \in \dot{H}^{1/2} \times \dot{H}^{-1/2}$ such that
\begin{equation}\label{1.15}
\lim_{t \rightarrow \infty} \| S(-t)(u(t), u_{t}(t)) - (u_{0}^{+}, u_{1}^{+}) \|_{\dot{H}^{1/2} \times \dot{H}^{-1/2}}, \qquad \lim_{t \rightarrow -\infty} \| S(-t)(u(t), u_{t}(t)) - (u_{0}^{-}, u_{1}^{-}) \|_{\dot{H}^{1/2} \times \dot{H}^{-1/2}} = 0.
\end{equation}
\end{remark}

\section{Strichartz estimates and local well-posedness}
Global well-posedness and scattering for $(\ref{1.1})$ is equivalent to obtaining the bound
\begin{equation}\label{2.1}
\| u \|_{L_{t,x}^{\frac{2(d + 1)}{d - 1}}(\mathbb{R} \times \mathbb{R}^{d})} < \infty.
\end{equation}

\begin{proof}[Proof of equivalence]
This fact follows from the Strichartz estimates.
\begin{theorem}[Strichartz estimates for the wave equation]\label{t2.1}
Let $I$ be a time interval and let $u : I \times \mathbb{R}^{d} \rightarrow \mathbb{C}$ be a Schwartz solution to the wave equation
\begin{equation}\label{2.2}
u_{tt} - \Delta u = F, \qquad u(t_{0}) = u_{0}, \qquad u_{t}(t_{0}) = u_{1},
\end{equation}
for some $t_{0} \in I$. Then
\begin{equation}\label{2.3}
\| u \|_{L_{t}^{q} L_{x}^{r}(I \times \mathbb{R}^{d})} + \| u \|_{C_{t}^{0} \dot{H}_{x}^{s}(I \times \mathbb{R}^{d})} + \| \partial_{t} u \|_{C_{t}^{0} \dot{H}_{x}^{s - 1}(I \times \mathbb{R}^{d})} \lesssim_{q, r, s, d} \| u_{0} \|_{\dot{H}_{x}^{s}(\mathbb{R}^{d})} + \| u_{1} \|_{\dot{H}_{x}^{s - 1}(\mathbb{R}^{d})} + \| F \|_{L_{t}^{\tilde{q}'} L_{x}^{\tilde{r}'}(I \times \mathbb{R}^{d})},
\end{equation}
whenever $s \geq 0$, $2 \leq q, \tilde{q} \leq \infty$ and $2 \leq r, \tilde{r} < \infty$ obey the scaling condition
\begin{equation}\label{2.4}
\frac{1}{q} + \frac{d}{r} = \frac{d}{2} - s = \frac{1}{\tilde{q}'} + \frac{d}{\tilde{r}'} - 2,
\end{equation}
and the wave admissibility conditions
\begin{equation}\label{2.5}
\frac{1}{q} + \frac{d - 1}{2r}, \frac{1}{\tilde{q}} + \frac{d - 1}{2\tilde{r}} \leq \frac{d - 1}{4}.
\end{equation}
\end{theorem}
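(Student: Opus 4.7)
The plan is to reduce the claim to the homogeneous estimate at a single frequency scale, handle that case via a $TT^*$ argument based on the dispersive bound for half-wave propagators, and then recover the inhomogeneous statement by duality together with the Christ--Kiselev lemma.

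First, I would decompose $u = u_{\text{hom}} + u_{\text{inh}}$, where $u_{\text{hom}}$ solves the free wave equation with data $(u_0,u_1)$ and $u_{\text{inh}}$ is the Duhamel term with forcing $F$. Writing the half-wave decomposition $u_{\text{hom}} = \cos(t\sqrt{-\Delta})u_0 + \frac{\sin(t\sqrt{-\Delta})}{\sqrt{-\Delta}}u_1$ reduces matters to bounding $e^{\pm it\sqrt{-\Delta}} f$ from $\dot H^s$ to $L^q_t L^r_x$. By Littlewood--Paley and the scaling relation \eqref{2.4}, it suffices to establish the frequency-localized version $\|e^{\pm it\sqrt{-\Delta}} P_0 f\|_{L^q_t L^r_x} \lesssim \|P_0 f\|_{L^2}$ for dyadic pieces and then assemble via square-function / Bernstein estimates. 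The $C_t^0 \dot H^s_x$ piece of \eqref{2.3} is immediate from energy conservation for each frequency projection.

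Second, the frequency-localized estimate is proved by the $TT^*$ method. The stationary-phase dispersive bound
\[
\bigl\| e^{\pm it\sqrt{-\Delta}} P_0 f \bigr\|_{L^\infty_x} \lesssim (1+|t|)^{-(d-1)/2} \|P_0 f\|_{L^1_x}
\]
combined with unitarity on $L^2$ and Riesz--Thorin interpolation yields $\|e^{\pm i(t-s)\sqrt{-\Delta}} P_0 g(s)\|_{L^r_x} \lesssim |t-s|^{-(d-1)(1/2-1/r)} \|g(s)\|_{L^{r'}_x}$. Applying Minkowski, Hardy--Littlewood--Sobolev, and the wave admissibility condition \eqref{2.5} handles every non-sharp admissible pair. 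The sharp case $\frac{1}{q}+\frac{d-1}{2r}=\frac{d-1}{4}$ with $q=2$ (which arises only when $d\ge 4$, the setting of the paper) requires the Keel--Tao bilinear interpolation argument: one dyadically decomposes the time separation $|t-s|\sim 2^k$ and exploits both the $L^1\to L^\infty$ dispersive bound and the $L^2\to L^2$ bound in a Whitney-type sum.

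Third, for the inhomogeneous estimate I would use duality to turn the retarded Duhamel integral against a $(\tilde q,\tilde r)$-admissible test function into a bilinear pairing controlled by the homogeneous estimates on both sides. This gives the untruncated estimate with integration over all of $\mathbb{R}$; to restrict to retarded times on $I$ one invokes the Christ--Kiselev lemma, which is applicable precisely because $(q,\tilde q')$ are never simultaneously equal to the double endpoint under \eqref{2.5} when one of the two exponents is strictly subcritical (and, in the remaining double-endpoint case, one instead argues directly via the Keel--Tao bilinear form). The main technical obstacle is the endpoint Strichartz pair, which is exactly the delicate Keel--Tao step; away from the endpoint everything is classical $TT^*$ plus Hardy--Littlewood--Sobolev, and the assembly over Littlewood--Paley pieces is routine since the scaling balance \eqref{2.4} matches the Bernstein losses across frequencies.
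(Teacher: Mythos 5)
The paper offers no proof of this theorem: it states that the result is copied from Tao's book and points to the original references (Kato, Ginibre--Velo, Kapitanski, Lindblad--Sogge, Keel--Tao, etc.). Your outline is essentially the standard proof contained in those references --- reduction to a single Littlewood--Paley block via the scaling condition \eqref{2.4}, the $TT^*$ argument combining the $(1+|t|)^{-(d-1)/2}$ dispersive bound with $L^2$ unitarity and Hardy--Littlewood--Sobolev away from the endpoint, the Keel--Tao bilinear/Whitney decomposition at the $q=2$ endpoint (which for the wave equation is indeed only admissible when $d \geq 4$), and duality plus Christ--Kiselev for the retarded inhomogeneous estimate with the double-endpoint case handled directly by the bilinear form --- and it is correct, with the two genuinely delicate points (the endpoint pair and the failure of Christ--Kiselev when $q=\tilde q'=2$) properly identified.
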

\begin{proof}
This theorem is copied from \cite{tao2006nonlinear}. The proof of this theorem is found in \cite{kato1994lq}, \cite{ginibre1995generalized}, \cite{kapitanski1989some}, \cite{lindblad1995existence}, \cite{sogge1995lectures}, \cite{shatah2000geometric}, \cite{keel1998endpoint}.
\end{proof}

When $s = \frac{1}{2}$, $(q, r) = (\frac{2(d + 1)}{d - 1}, \frac{2(d + 1)}{d - 1})$ satisfies $(\ref{2.4})$ and $(\ref{2.5})$, as does $(\tilde{q}', \tilde{r}') = (\frac{2(d + 1)}{d + 3}, \frac{2(d + 1)}{d + 3})$. Furthermore, by H{\"o}lder's inequality,
\begin{equation}\label{2.6}
\| |u|^{\frac{4}{d - 1}} u \|_{L_{t,x}^{\frac{2(d + 1)}{d + 3}}(\mathbb{R} \times \mathbb{R}^{d})} \leq \| u \|_{L_{t,x}^{\frac{2(d + 1)}{d - 1}}(\mathbb{R} \times \mathbb{R}^{d})}^{1 + \frac{4}{d - 1}}.
\end{equation}
Therefore, standard small data arguments and the Lebesgue dominated convergence theorem imply
\begin{theorem}\label{t2.2}
The nonlinear wave equation $(\ref{1.1})$ is globally well-posed and scattering for initial data sufficiently small, that is, $\| u_{0} \|_{\dot{H}^{1/2}} + \| u_{1} \|_{\dot{H}^{-1/2}} < \epsilon_{0}(d)$, where $0 < \epsilon_{0}(d) \ll 1$. For any $(u_{0}, u_{1}) \in \dot{H}^{1/2} \times \dot{H}^{-1/2}$, there exists some $T(u_{0}, u_{1}) > 0$ such that $(\ref{1.1})$ is locally well-posed on an interval $(-T, T)$.
\end{theorem}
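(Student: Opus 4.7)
The plan is to run a standard Picard fixed-point argument on the Duhamel operator. Applying Theorem \ref{t2.1} with $s = 1/2$ and both admissible pairs equal to $(q,r) = (\tilde{q},\tilde{r}) = (\frac{2(d+1)}{d-1}, \frac{2(d+1)}{d-1})$, together with the H\"older bound (\ref{2.6}), yields the master estimate
\[
\|\Phi(u)\|_{L_{t,x}^{\frac{2(d+1)}{d-1}}(I \times \mathbb{R}^d)} \leq C\bigl(\|u_0\|_{\dot{H}^{1/2}} + \|u_1\|_{\dot{H}^{-1/2}}\bigr) + C \|u\|_{L_{t,x}^{\frac{2(d+1)}{d-1}}(I \times \mathbb{R}^d)}^{1 + \frac{4}{d-1}},
\]
together with an analogous difference estimate for $\Phi(u) - \Phi(v)$, where $\Phi$ denotes the Duhamel map obtained from (\ref{1.13}).

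For the small-data claim, take $I = \mathbb{R}$. When $\|u_0\|_{\dot{H}^{1/2}} + \|u_1\|_{\dot{H}^{-1/2}} < \epsilon_0$, set $R = 2C\epsilon_0$ and restrict $\Phi$ to the closed ball of radius $R$ in $L_{t,x}^{\frac{2(d+1)}{d-1}}(\mathbb{R} \times \mathbb{R}^d)$. Choosing $\epsilon_0 = \epsilon_0(d)$ small enough that $CR^{\frac{4}{d-1}} \leq 1/2$, the master estimate shows $\Phi$ is a self-map of this ball and a contraction, so a unique global solution exists. Scattering then follows by setting
\[
(u_0^+, u_1^+) = (u_0, u_1) - \int_0^\infty S(-\tau)\bigl(0, |u|^{\frac{4}{d-1}}u\bigr)\, d\tau,
\]
with the integral converging in $\dot{H}^{1/2} \times \dot{H}^{-1/2}$ by the Strichartz inhomogeneous estimate applied to the tail, together with the fact that $\||u|^{\frac{4}{d-1}}u\|_{L_{t,x}^{\frac{2(d+1)}{d+3}}(\mathbb{R} \times \mathbb{R}^d)} < \infty$; the $t \to -\infty$ case is symmetric.

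For the local claim, given arbitrary $(u_0, u_1) \in \dot{H}^{1/2} \times \dot{H}^{-1/2}$, Theorem \ref{t2.1} ensures $S(\cdot)(u_0, u_1) \in L_{t,x}^{\frac{2(d+1)}{d-1}}(\mathbb{R} \times \mathbb{R}^d)$, and the dominated convergence theorem furnishes $T = T(u_0, u_1) > 0$ so small that
\[
\|S(t)(u_0, u_1)\|_{L_{t,x}^{\frac{2(d+1)}{d-1}}((-T,T) \times \mathbb{R}^d)} < \frac{\epsilon_0}{2C}.
\]
Rerunning the contraction on the interval $(-T,T)$ yields the local solution.

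I do not expect a serious obstacle: this is the textbook small-data Strichartz theory, cleanly enabled by the conformal exponent $p - 1 = \frac{4}{d-1}$, which makes the solution pair $(q,r)$ and the nonlinearity dual pair $(\tilde{q}', \tilde{r}')$ diagonal, so only the single scale-invariant norm $L_{t,x}^{\frac{2(d+1)}{d-1}}$ needs to be controlled. The only mildly subtle point is the dominated-convergence step in the large-data case, which relies on the global finiteness of the linear Strichartz norm to force smallness upon restriction to a short enough time interval rather than upon smallness of the initial data itself.
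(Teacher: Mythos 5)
Your proposal is correct and is exactly the argument the paper has in mind: the paper cites Lindblad--Sogge for the formal proof but explicitly states that the theorem follows from ``standard small data arguments and the Lebesgue dominated convergence theorem'' applied to Theorem \ref{t2.1} with the diagonal pair $(q,r)=(\tfrac{2(d+1)}{d-1},\tfrac{2(d+1)}{d-1})$ and the H\"older bound (\ref{2.6}), which is precisely your contraction scheme. The one point worth stating explicitly when you write this up is that in the large-data local case the contraction must be run in a small ball of the $L_{t,x}^{\frac{2(d+1)}{d-1}}((-T,T)\times\mathbb{R}^d)$ norm alone, with the free evolution $\|S(t)(u_0,u_1)\|_{L_{t,x}^{\frac{2(d+1)}{d-1}}((-T,T))}$ (not the full $\dot{H}^{1/2}\times\dot{H}^{-1/2}$ norm) playing the role of the small parameter --- which is what your dominated-convergence step provides.
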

\begin{proof}
This theorem was proved in \cite{lindblad1995existence}.
\end{proof}

Now suppose $u$ has a solution to $(\ref{1.1})$ on $[0, T)$ that satisfies
\begin{equation}\label{2.7}
\| u \|_{L_{t,x}^{\frac{2(d + 1)}{d - 1}}([0, T) \times \mathbb{R}^{d})} < \infty.
\end{equation}
By Duhamel's principle,
\begin{equation}\label{2.8}
(u(t), u_{t}(t)) = S(t)(u_{0}, u_{1}) - \int_{0}^{t} S(t - \tau)(0, |u|^{\frac{4}{d - 1}} u) d\tau.
\end{equation}
For any fixed $(u_{0}, u_{1}) \in \dot{H}^{1/2} \times \dot{H}^{-1/2}$,
\begin{equation}\label{2.9}
S(t)(u_{0}, u_{1}) \rightarrow S(T)(u_{0}, u_{1}) \qquad \text{in} \qquad \dot{H}^{1/2} \times \dot{H}^{-1/2}.
\end{equation}
Also, by $(\ref{2.7})$, Theorem $\ref{t2.1}$, and the dominated convergence theorem, for any $\epsilon > 0$ there exists $\delta(T, \epsilon) > 0$ sufficiently small such that, for any $t \in [T - \delta, T)$,
\begin{equation}\label{2.10}
\| \int_{T - \delta}^{t} S(t - \tau)(0, |u|^{\frac{4}{d - 1}} u) d\tau \|_{\dot{H}^{1/2} \times \dot{H}^{-1/2}} \lesssim \epsilon^{1 + \frac{4}{d - 1}}.
\end{equation}
As in $(\ref{2.9})$,
\begin{equation}\label{2.11}
\int_{0}^{T - \delta} S(t - \tau) (0, |u|^{\frac{4}{d - 1}} u) d\tau \rightarrow \int_{0}^{T - \delta} S(T - \tau)(0, |u|^{\frac{4}{d - 1}} u) d\tau,
\end{equation}
in $\dot{H}^{1/2} \times \dot{H}^{-1/2}$, by $(\ref{2.7})$. Therefore, $(u(t), u_{t}(t))$ converges in $\dot{H}^{1/2} \times \dot{H}^{-1/2}$ as $t \nearrow T$. By Theorem $\ref{t2.2}$, the solution to $(\ref{1.1})$ can therefore be extended past $T$.\medskip

Then if $(\ref{2.1})$ holds, set
\begin{equation}\label{2.12}
(u_{0}^{+}, u_{1}^{+}) = (u_{0}, u_{1}) - \int_{0}^{\infty} S(-\tau)(0, |u|^{\frac{4}{d - 1}} u) d\tau.
\end{equation}
Theorem $\ref{t2.1}$ and $(\ref{2.1})$ imply that $(u_{0}^{+}, u_{1}^{+}) \in \dot{H}^{1/2} \times \dot{H}^{-1/2}$ and furthermore,
\begin{equation}\label{2.13}
\lim_{t \rightarrow \infty} \| S(t)(u_{0}^{+}, u_{1}^{+}) - (u(t), u_{t}(t)) \|_{\dot{H}^{1/2} \times \dot{H}^{-1/2}} = 0.
\end{equation}
An identical argument shows that a solution to $(\ref{1.1})$ satisfying $(\ref{1.1})$ scatters backward in time. Therefore, $(\ref{2.1})$ implies that scattering holds.

The converse will not be needed here, so the proof will merely be sketched. The idea is that if scattering holds, $S(t)(u_{0}^{+}, u_{1}^{+})$ will dominate the solution for large $t$ and $S(t)(u_{0}^{-}, u_{1}^{-})$ will dominate for $|t|$ large, $t$ negative. By standard perturbative arguments combined with Strichartz estimates, this implies that there exists some $T$ sufficiently large such that
\begin{equation}\label{2.14}
\| u \|_{L_{t,x}^{\frac{2(d + 1)}{d - 1}}((-\infty, -T] \cup [T, \infty) \times \mathbb{R}^{d})} < \infty.
\end{equation}
Now then, since $u \in C_{t}^{0}(\mathbb{R}; \dot{H}^{1/2})$ and $u_{t} \in C_{t}^{0}(\mathbb{R}; \dot{H}^{-1/2})$, for any $t_{0} \in [-T, T]$, the local well-posedness result of \cite{lindblad1995existence} holds on some interval $(t_{0} - \delta(t_{0}), t_{0} + \delta(t_{0}))$, with
\begin{equation}\label{2.15}
\| u \|_{L_{t,x}^{\frac{2(d + 1)}{d - 1}}((t_{0} - \delta(t_{0}), t_{0} + \delta(t_{0})) \times \mathbb{R}^{d})} \leq 1.
\end{equation}
Since $[-T, T]$ is compact, every open cover has a finite subcover, so $(\ref{2.15})$ implies
\begin{equation}\label{2.16}
\| u \|_{L_{t,x}^{\frac{2(d + 1)}{d - 1}}([-T, T] \times \mathbb{R}^{d})} < \infty.
\end{equation}
\end{proof}

\section{The conformal energy}
Recall the scattering result of \cite{strauss1968decay}.
\begin{theorem}\label{t3.1}
Let $v$ be a solution to
\begin{equation}\label{3.1}
v_{tt} - \Delta v + |v|^{\frac{4}{d - 1}} v = 0, \qquad v(0,x) = v_{0}, \qquad v_{t}(0,x) \in v_{1},
\end{equation}
such that
\begin{equation}\label{3.2}
\| |x| \nabla_{x} v_{0} \|_{L^{2}} + \| |x| v_{1} \|_{L^{2}} + \| v_{0} \|_{L^{2}} + \int |x|^{2} |v|^{\frac{2(d + 1)}{d - 1}} dx. < \infty.
\end{equation}
Then the solution to $(\ref{3.1})$ is global and satisfies
\begin{equation}\label{3.2.1}
\| v \|_{L_{t,x}^{\frac{2(d + 1)}{d - 1}}(\mathbb{R} \times \mathbb{R}^{d})} < \infty.
\end{equation}
\end{theorem}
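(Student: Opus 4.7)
My plan is to apply the \emph{conformal energy} method (Morawetz multiplier associated with the time-like conformal Killing vector field). Let $p=(d+3)/(d-1)$ so that $p-1=4/(d-1)$ and $p+1 = 2(d+1)/(d-1)$, and consider the vector field $K_{0} = (t^{2}+|x|^{2})\partial_{t} + 2t\,x\cdot\nabla_{x}$ on $\mathbb{R}^{1+d}$. Testing the equation against $K_{0}v + (d-1)t\,v$ and integrating by parts yields, for the \emph{conformal energy}
$$E_{c}(t) = \tfrac{1}{2}\!\int\!\Bigl[(t^{2}+|x|^{2})\bigl(|v_{t}|^{2}+|\nabla v|^{2}\bigr) + 4t\,x\cdot\nabla v\,v_{t} + 2(d-1)t\,v\,v_{t} + \tfrac{(d-1)^{2}}{2}|v|^{2}\Bigr]dx + \frac{d-1}{p+1}\!\int (t^{2}+|x|^{2})|v|^{p+1}\,dx,$$
the identity $\frac{d}{dt}E_{c}(t) \equiv 0$. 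The critical cancellation of the nonlinear contribution occurs precisely because $p-1=4/(d-1)$ is the conformal exponent; for any other power one only gets a monotonicity with an uncontrolled sign term. The quadratic part of $E_c(t)$ can be rewritten as a sum of squares (grouping $tu_t+x\cdot\nabla u + \tfrac{d-1}{2}u$ and angular/radial pieces), so $E_c(t)$ is coercive.

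Hypothesis (3.2) is tailored exactly so that $E_{c}(0)<\infty$: the four summands in (3.2) bound, respectively, the three quadratic terms at $t=0$ (where the cross terms $t\,x\cdot\nabla v\,v_{t}$ and $t\,v\,v_{t}$ vanish) and the nonlinear potential term $\int|x|^{2}|v_{0}|^{p+1}dx$. By conservation, for every $t\in\mathbb{R}$,
$$\int (t^{2}+|x|^{2})\,|v(t,x)|^{\frac{2(d+1)}{d-1}}\,dx \;\leq\; C\,E_{c}(0).$$
Discarding $|x|^{2}$ and using $t^{2}+|x|^{2}\geq t^{2}$ produces the decay $\|v(t)\|_{L^{2(d+1)/(d-1)}_{x}}^{2(d+1)/(d-1)} \lesssim E_{c}(0)\,t^{-2}$ for $|t|\geq 1$, which is integrable in $t$ on $\{|t|\geq 1\}$ and therefore yields $\|v\|_{L^{2(d+1)/(d-1)}_{t,x}((-\infty,-1]\cup[1,\infty)\times\mathbb{R}^{d})}<\infty$. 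On the compact interval $[-1,1]$, the $\dot H^{1/2}\times\dot H^{-1/2}$ norm of the data is controlled (by interpolation) by the quantities in (3.2), so Theorem~\ref{t2.2} combined with a finite covering of $[-1,1]$ gives $\|v\|_{L^{2(d+1)/(d-1)}_{t,x}([-1,1]\times\mathbb{R}^{d})}<\infty$. Concatenating yields (3.2.1), and then globality and scattering are immediate from the equivalence proved after Theorem~\ref{t2.2}.

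The main obstacle is the rigorous derivation of the conformal conservation law and the verification of the exact nonlinear cancellation at $p=(d+3)/(d-1)$. After multiplication by $K_{0}v+(d-1)tv$, the nonlinearity produces two contributions: one of the form $\frac{d}{dt}\!\int\frac{t^{2}+|x|^{2}}{p+1}|v|^{p+1}dx - \frac{2t}{p+1}\!\int|v|^{p+1}dx$ from the $\partial_{t}$ part of $K_{0}$, and another of the form $-\frac{2td}{p+1}\!\int|v|^{p+1}dx$ from integrating $2t\,x\cdot\nabla v$ by parts, together with a $\frac{2(d-1)t}{p+1}\!\int|v|^{p+1}dx$ contribution from the $(d-1)tv$ correction; these combine to zero only when $1 + d = (d-1)\cdot\frac{p+1}{2}$, i.e.\ exactly when $p+1 = 2(d+1)/(d-1)$. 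Rigor demands approximating $(v_0,v_1)$ by Schwartz data so that integration by parts and differentiation under the integral are justified, and then passing to the limit using local well-posedness and continuous dependence. A more geometric alternative would be to apply the Penrose conformal compactification, which maps (3.1) to a semilinear wave equation with the same power on a relatively compact subset of the Einstein cylinder $\mathbb{R}\times S^{d}$, thereby reducing global existence and scattering to local well-posedness on a compact time interval; but for the present purpose Strauss's multiplier identity is the most direct route.
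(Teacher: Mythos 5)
Your proposal is essentially the paper's own argument: contracting the stress--energy tensor with the conformal Killing field $K_{0}$ (plus the $(d-1)tv$ correction) is exactly the paper's conserved quantity $Q(t)$ in (3.7), rewritten as a sum of squares to get coercivity, with hypothesis (3.2) guaranteeing $\mathcal E(0)<\infty$ and conservation giving the $t^{-2}$ decay of $\|v(t)\|_{L^{(2(d+1))/(d-1)}_x}^{(2(d+1))/(d-1)}$. Aside from some slipped constants in your displayed $E_c$ and in the intermediate nonlinear bookkeeping (the $(d-1)tv$ multiplier contributes $(d-1)t\int|v|^{p+1}$, not $\tfrac{2(d-1)t}{p+1}\int|v|^{p+1}$, though your final cancellation condition $p+1=2(d+1)/(d-1)$ is correct), the proof is sound, and your explicit treatment of $|t|\le 1$ via local well-posedness is actually slightly more careful than the paper's.
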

\begin{proof}
The proof uses the conservation of the conformal energy, see the exposition in \cite{tao2006nonlinear}, and follows directly from the lemma of \cite{klainerman2001commuting}
\begin{lemma}\label{l3.2}
The conformal energy
\begin{equation}\label{3.3}
\aligned
\mathcal E(v) = \frac{1}{4} \int_{\mathbb{R}^{d}} |(t + |x|) Lv + (d - 1) v|^{2} + |(t - |x|) \underline{L} v + (d - 1) v|^{2} \\ + 2(t^{2} + |x|^{2}) |\cancel{\nabla} v|^{2} dx + \frac{d - 1}{4(d + 1)} \int (t^{2} + |x|^{2}) |v|^{\frac{2(d + 1)}{d - 1}} dx,
\endaligned
\end{equation}
is conserved for a solution to $(\ref{3.1})$, where $L = (\partial_{t} + \frac{x}{|x|} \cdot \nabla)$ and $\underline{L} = (\partial_{t} - \frac{x}{|x|} \cdot \nabla)$.
\end{lemma}
Indeed, if Lemma $\ref{l3.2}$ is true, $E(v)(0) < \infty$ implies
\begin{equation}\label{3.4}
\| v \|_{L_{t,x}^{\frac{2(d + 1)}{d - 1}}(\mathbb{R} \times \mathbb{R}^{d})}^{\frac{2(d + 1)}{d - 1}} < \infty,
\end{equation}
which proves the Theorem.
\end{proof}
\begin{proof}[Proof of Lemma $\ref{l3.2}$.]
Define the tensors
\begin{equation}\label{3.5}
\aligned
T^{00}(t,x) &= \frac{1}{2} |\partial_{t} v|^{2} + \frac{1}{2} |\nabla v|^{2} + \frac{d - 1}{2(d + 1)} |v|^{\frac{2(d + 1)}{d - 1}}, \\
T^{0j}(t,x) &= T^{j0}(t,x) = -(\partial_{t} v)(\partial_{x_{j}} v), \\
T^{jk}(t,x) &= (\partial_{x_{j}} v \partial_{x_{k}} v) - \frac{\delta_{jk}}{2}(|\nabla v|^{2} - |\partial_{t} v|^{2}) - \delta_{jk} \frac{d - 1}{2(d + 1)} |v|^{\frac{2(d + 1)}{d - 1}}.
\endaligned
\end{equation}
The tensor functions satisfy the differential equations
\begin{equation}\label{3.6}
\partial_{t} T^{00}(t,x) + \partial_{x_{j}} T^{0j}(t,x) = 0, \qquad \partial_{t} T^{0j}(t,x) + \partial_{x_{k}} T^{jk}(t,x) = 0.
\end{equation}
The differential equations $(\ref{3.6})$ imply that the quantity
\begin{equation}\label{3.7}
Q(t) = \int (t^{2} + |x|^{2}) T^{00}(t,x) - 2t x_{j} T^{0j}(t,x) + (d - 1) t v(\partial_{t} v) - \frac{d - 1}{2} |v|^{2} dx,
\end{equation}
is conserved. The Einstein summation convention is observed. Indeed, by $(\ref{3.6})$,
\begin{equation}\label{3.8}
\aligned
\frac{d}{dt} Q(t) = 2t \int T^{00}(t,x) dx - \int (t^{2} + |x|^{2}) \partial_{x_{j}} T^{0j}(t,x) dx - 2t \int x_{j} T^{0j}(t,x) dx + 2t \int x_{j} \partial_{x_{k}} T^{jk}(t,x) dx \\
+ (d - 1) \int v (\partial_{t} v) dx + (d - 1) t \int (\partial_{t} v)^{2} dx + (d - 1) t \int v(\Delta v - |v|^{\frac{4}{d - 1}} v) dx - (d - 1) \int v (\partial_{t} v) dx.
\endaligned
\end{equation}
Integrating the second term in $(\ref{3.8})$ by parts,
\begin{equation}\label{3.9}
\aligned
=  2t \int T^{00}(t,x) dx - 2t \int \delta_{jk} T^{jk}(t,x) dx  + (d - 1) t \int (\partial_{t} v)^{2} dx + (d - 1) t \int v(\Delta v - |v|^{\frac{4}{d - 1}} v) dx.
\endaligned
\end{equation}
Since $\delta_{jk} \delta_{jk} = d$,
\begin{equation}\label{3.10}
\aligned
= 2t \int T^{00}(t,x) dx - 2t \int |\nabla v|^{2} + dt \int (|\nabla v|^{2} - |\partial_{t} v|^{2}) dx + \frac{d(d - 1)t}{d + 1} \int |v|^{\frac{2(d + 1)}{d - 1}} dx \\ + (d - 1)t \int |\partial_{t} v|^{2} dx - (d - 1)t \int |\nabla v|^{2} dx - (d - 1)t \int |v|^{\frac{2(d + 1)}{d - 1}} dx.
\endaligned
\end{equation}
Doing some algebra,
\begin{equation}\label{3.11}
= 2t \int T^{00}(t,x) dx - t \int |\nabla v|^{2} dx - t \int |\partial_{t} v|^{2} dx - \frac{d - 1}{d + 1} t \int |v|^{\frac{2(d + 1)}{d - 1}} dx = 0.
\end{equation}
Therefore, $Q(t)$ is conserved.

Now then,
\begin{equation}\label{3.12}
\aligned
\int (t^{2} + |x|^{2}) T^{00}(t,x) dx - \int 2t x_{j} T^{0j}(t,x) dx \\ = \int (t^{2} + |x|^{2}) (\frac{1}{2} |\partial_{t} v|^{2} + \frac{1}{2} |\frac{x}{|x|} \cdot \nabla v|^{2} + \frac{1}{2} |\cancel{\nabla} v|^{2} + \frac{d - 1}{2(d + 1)} |v|^{\frac{2(d + 1)}{d - 1}}) dx \\
= \frac{1}{4} \int (t + |x|)^{2} |Lv|^{2} dx + \frac{1}{4} \int (t - |x|)^{2} |\underline{L}v|^{2} + \frac{1}{2} (t^{2} + |x|^{2}) |\cancel{\nabla} v|^{2} dx + \frac{d - 1}{2(d + 1)} \int (t^{2} + |x|^{2}) |v|^{\frac{2(d + 1)}{d - 1}} dx.
\endaligned
\end{equation}
Next, integrating by parts,
\begin{equation}\label{3.13}
\aligned
\frac{1}{2} \langle (t + |x|) Lv, (d - 1) v \rangle_{L^{2}} + \frac{1}{2} \langle (t - |x|) Lv, (d - 1) v \rangle_{L^{2}} \\ = (d - 1)t \int (\partial_{t} v) v dx + (d - 1) \int v (x \cdot \nabla v) dx = (d - 1)t \int (\partial_{t} v) v dx - \frac{d(d - 1)}{2} \int |v|^{2} dx.
\endaligned
\end{equation}
Since
\begin{equation}\label{3.14}
-\frac{d(d - 1)}{2} \int |v|^{2} dx + \frac{(d - 1)^{2}}{2} \int |v|^{2} dx = -\frac{d - 1}{2} \int |v|^{2} dx,
\end{equation}
$(\ref{3.12})$--$(\ref{3.14})$ imply that $Q(t)$ is equal to the right hand side of $(\ref{3.3})$.
\end{proof}

\section{Proof of Theorem $\ref{t1.1}$}
\begin{proof}[Proof of Theorem $\ref{t1.1}$]
Since $B_{1,1}^{\frac{d}{2} + \frac{1}{2}} \subset \dot{H}^{1/2}$ and $B_{1,1}^{\frac{d}{2} - \frac{1}{2}} \subset \dot{H}^{-1/2}$, $(\ref{1.1})$ is locally well-posed on some interval $(-T_{1}, T_{2})$, where $T_{1}, T_{2} > 0$. To prove global well-posedness and scattering, it suffices to to show that
\begin{equation}\label{4.1}
\| u \|_{L_{t,x}^{\frac{2(d + 1)}{d - 1}}([0, T_{2}) \times \mathbb{R}^{d})} < \infty, \qquad \text{and} \qquad \| u \|_{L_{t,x}^{\frac{2(d + 1)}{d - 1}}((-T_{1}, 0] \times \mathbb{R}^{d})} < \infty,
\end{equation}
where $(-T_{1}, T_{2})$ is the maximal interval of existence of the solution to $(\ref{1.1})$, and that the bound does not depend on $T_{1}$ and $T_{2}$.\medskip

To prove this, decompose the solution on $(-T_{1}, T_{2})$, $u = v + w$, where $v$ and $w$ solve the equations
\begin{equation}\label{4.2}
v_{tt} - \Delta v + |v + w|^{\frac{4}{d - 1}} (v + w) = 0, \qquad v(0,x) = 0, \qquad v_{t}(0,x) = 0,
\end{equation}
and
\begin{equation}\label{4.3}
w_{tt} - \Delta w = 0, \qquad w(0,x) = u_{0}, \qquad w_{t}(0,x) = u_{1}.
\end{equation}
Strichartz estimates, Theorem $\ref{t2.1}$, imply $\| w \|_{L_{t,x}^{\frac{2(d + 1)}{d - 1}}(\mathbb{R} \times \mathbb{R}^{d})} \lesssim_{d} A$, so to prove $(\ref{2.1})$, it suffices to show $\| v \|_{L_{t,x}^{\frac{2(d + 1)}{d - 1}}(\mathbb{R} \times \mathbb{R}^{d})} < \infty$.

Split
\begin{equation}\label{4.4}
|v + w|^{\frac{4}{d - 1}} (v + w) = |v|^{\frac{4}{d - 1}} v + F, \qquad \text{where} \qquad |F| \lesssim |v|^{\frac{4}{d - 1}} |w| + |w|^{\frac{d + 3}{d - 1}}.
\end{equation}
Now let $\mathcal E(t)$ denote the conformal energy of $v$, see $(\ref{3.3})$. Then by the computations proving Lemma $\ref{l3.2}$,
\begin{equation}\label{4.5}
\frac{d}{dt} \mathcal E(t) = -\frac{1}{2} \langle (t + |x|) Lv + (d - 1) v, (t + |x|) F \rangle - \frac{1}{2} \langle (t - |x|) \underline{L} v + (d - 1) v, (t - |x|) F \rangle.
\end{equation}
Since
\begin{equation}\label{4.6}
\| (t + |x|) Lv + (d - 1) v \|_{L^{2}} + \| (t - |x|) \underline{L} v + (d - 1) v \|_{L^{2}} \lesssim \mathcal E(t)^{1/2},
\end{equation}
\begin{equation}\label{4.7}
\frac{d}{dt} \mathcal E(t) \lesssim \mathcal E(t)^{1/2} (\| t |w|^{\frac{2}{d - 1}} \|_{L^{\infty}} + \| |x| |w|^{\frac{2}{d - 1}} \|_{L^{\infty}}) \| |w|^{\frac{d - 3}{d - 1}} |v|^{\frac{4}{d - 1}} + |w|^{\frac{d + 1}{d - 1}} \|_{L^{2}}.
\end{equation}
The dispersive estimate implies
\begin{equation}\label{4.8}
\| t |w|^{\frac{2}{d - 1}} \|_{L^{\infty}} \lesssim A^{\frac{2}{d - 1}},
\end{equation}
and the radial Sobolev embedding theorem implies
\begin{equation}\label{4.9}
\| |x| |w|^{\frac{2}{d - 1}} \|_{L^{\infty}} \lesssim A^{\frac{2}{d - 1}}.
\end{equation}
Therefore,
\begin{equation}\label{4.10}
\frac{d}{dt} \mathcal E(t) \lesssim A^{\frac{2}{d - 1}} \mathcal E(t)^{1/2} (\| w \|_{L_{x}^{\frac{2(d + 1)}{d - 1}}}^{\frac{d + 1}{d - 1}} + \| v \|_{L_{x}^{\frac{2(d + 1)}{d - 1}}}^{\frac{4}{d - 1}} \| w \|_{L_{x}^{\frac{2(d + 1)}{d - 1}}}^{\frac{d - 3}{d - 1}}).
\end{equation}
Then by $(\ref{3.3})$,
\begin{equation}\label{4.11}
\frac{d}{dt} \mathcal E(t) \lesssim A^{\frac{2}{d - 1}} \mathcal E(t)^{1/2} \| w \|_{L_{x}^{\frac{2(d + 1)}{d - 1}}}^{\frac{d + 1}{d - 1}} + \frac{1}{t^{\frac{4}{d + 1}}} A^{\frac{2}{d - 1}} \mathcal E(t)^{\frac{1}{2} + \frac{2}{d + 1}} \| w \|_{L_{x}^{\frac{2(d + 1)}{d - 1}}}^{\frac{d - 3}{d - 1}}.
\end{equation}

By $(\ref{3.3})$, $\mathcal E(0) = 0$, the fundamental theorem of calculus, and $(\ref{4.11})$,
\begin{equation}\label{4.12}
\| v \|_{L_{t,x}^{\frac{2(d + 1)}{d - 1}}([0, T_{2}) \times \mathbb{R}^{d})}^{\frac{2(d + 1)}{d - 1}} \lesssim \int_{0}^{T_{2}} \frac{1}{t^{2}} \mathcal E(t) dt \lesssim \int_{0}^{T_{2}} \frac{1}{t^{2}} \int_{0}^{t} A^{\frac{2}{d - 1}} \mathcal E(\tau)^{1/2} \| w \|_{L_{x}^{\frac{2(d + 1)}{d - 1}}}^{\frac{d + 1}{d - 1}} + \frac{1}{\tau^{\frac{4}{d + 1}}} A^{\frac{2}{d - 1}} \mathcal E(\tau)^{\frac{1}{2} + \frac{2}{d + 1}} \| w \|_{L_{x}^{\frac{2(d + 1)}{d - 1}}}^{\frac{d - 3}{d - 1}} d\tau.
\end{equation}
Then by Fubini's theorem,
\begin{equation}\label{4.13}
(\ref{4.12}) = A^{\frac{2}{d - 1}} \int_{0}^{T_{2}} \mathcal E(\tau)^{\frac{1}{2}} \| w(\tau) \|_{L_{x}^{\frac{2(d + 1)}{d - 1}}}^{\frac{d + 1}{d - 1}} \int_{\tau}^{T_{2}} \frac{1}{t^{2}} dt + A^{\frac{2}{d - 1}} \int_{0}^{T_{2}} \frac{1}{\tau^{\frac{4}{d + 1}}} \mathcal E(\tau)^{\frac{1}{2} + \frac{2}{d + 1}} \int_{\tau}^{T_{2}} \frac{1}{t^{2}} dt.
\end{equation}
\begin{equation}\label{4.14}
\lesssim A^{\frac{2}{d - 1}} \int_{0}^{T_{2}} \frac{1}{\tau} \mathcal E(\tau)^{\frac{1}{2}} \| w(\tau) \|_{L_{x}^{\frac{2(d + 1)}{d - 1}}}^{\frac{d + 1}{d - 1}} d\tau + A^{\frac{2}{d - 1}} \int_{0}^{T_{2}} \frac{1}{\tau^{\frac{d + 5}{d + 1}}} \mathcal E(\tau)^{\frac{d + 5}{2(d + 1)}} \| w(\tau) \|_{L_{x}^{\frac{2(d + 1)}{d - 1}}}^{\frac{d - 3}{d - 1}} d\tau.
\end{equation}
By H{\"o}lder's inequality,
\begin{equation}\label{4.15}
\lesssim A^{\frac{2}{d - 1}} (\int_{0}^{T_{2}} \frac{1}{\tau^{2}} \mathcal E(\tau) d\tau)^{1/2} (\int_{0}^{T_{2}} \| w(\tau) \|_{L_{x}^{\frac{2(d + 1)}{d - 1}}}^{\frac{2(d + 1)}{d - 1}} d\tau)^{1/2} + A^{\frac{2}{d - 1}} (\int_{0}^{T_{2}} \frac{1}{\tau^{2}} \mathcal E(\tau) d\tau)^{\frac{d + 5}{2(d + 1)}} (\int_{0}^{T_{2}} \| w(\tau) \|_{L_{x}^{\frac{2(d + 1)}{d - 1}}}^{\frac{2(d + 1)}{d - 1}} d\tau)^{\frac{d - 3}{2(d + 1)}}.
\end{equation}

By Strichartz estimates,
\begin{equation}\label{4.16}
\int_{\mathbb{R}} \| w(t) \|_{L_{x}^{\frac{2(d + 1)}{d - 1}}}^{\frac{2(d + 1)}{d - 1}} dt \lesssim A^{\frac{2(d + 1)}{d - 1}}.
\end{equation}
Therefore, doing some algebra,
\begin{equation}\label{4.17}
\| v \|_{L_{t,x}^{\frac{2(d + 1)}{d - 1}}([0, T_{2}) \times \mathbb{R}^{d})}^{\frac{2(d + 1)}{d - 1}} \lesssim \int_{0}^{T_{2}} \frac{1}{t^{2}} \mathcal E(t) dt \lesssim A^{\frac{2(d + 3)}{d - 1}} + A^{\frac{2(d + 1)}{d - 3}}.
\end{equation}
Thus, $T_{2} = \infty$ and the solution to $(\ref{1.1})$ is global forward in time. By time reversal symmetry, the proof is complete.
\end{proof}

\begin{remark}
To place the use of Fubini's theorem on firm footing, it is possible to approximate $u_{0}$ and $u_{1}$ by smooth, compactly supported functions. Then by Theorem $\ref{t3.1}$, $\| u \|_{L_{t,x}^{\frac{2(d + 1)}{d - 1}}(\mathbb{R} \times \mathbb{R}^{d})} < \infty$, and the bounds are controlled by the right hand side of $(\ref{4.17})$.
\end{remark}

\section*{Acknowledgements}
During the time of writing this paper, the author was partially supported by NSF Grant DMS-$1764358$. The author is also grateful to Andrew Lawrie and Walter Strauss for some helpful conversations at MIT on subcritical nonlinear wave equations.

\bibliography{biblio}
\bibliographystyle{alpha}

\end{document}